\documentclass[12pt]{article}

\usepackage[english]{babel}
\usepackage[numbers,sort,compress]{natbib}
\usepackage[T1]{fontenc}
\usepackage{amsthm,amsmath,amssymb}
\usepackage[mathscr]{eucal}
\usepackage[utf8]{inputenc}
\usepackage{etoolbox}
\usepackage{latexsym}
\usepackage{graphicx}
\usepackage{geometry}
\usepackage{longtable}
\usepackage{array}     
\usepackage{pgffor}    
\usepackage{caption}
\usepackage{physics}
\newtheorem{theorem}{Theorem}[section]
\newtheorem{corollary}[theorem]{Corollary}
\newtheorem{proposition}[theorem]{Proposition}
\newtheorem{lemma}[theorem]{Lemma}
\newtheorem{definition}[theorem]{Definition}

\def \adj{\operatorname{adj}}
\def \tr{\operatorname{tr}}

\begin{document}
	\title{On enumeration of spanning trees of complete multipartite graphs containing a fixed spanning forest}
	
	\author{Wei Wang$^a$ \quad Jun Ge$^b$\thanks{Corresponding author. Email address: mathsgejun@163.com} \\[2mm]
		{\footnotesize  $^a$School of Mathematics, Physics and Finance, Anhui Polytechnic University, Wuhu 241000, China}\\		
		{\footnotesize  $^b$School of Mathematical Sciences, Sichuan Normal University, Chengdu 610066, China}
	}
	\date{}
	
	\maketitle
	
	\begin{abstract}
		 We present a determinantal formula for the number of spanning trees of a complete multipartite graph containing a given spanning forest $F$. Our approach relies on the Generalized Matrix Determinant Lemma and Jacobi's formula for the derivative of a determinant.  This work generalizes known results for complete bipartite graphs and offers an algebraic perspective on the problem.
	\end{abstract}
	
	\noindent\textbf{Keywords:} spanning tree; complete multipartite graph; Laplacian matrix;  Matrix Determinant Lemma; Jacobi's formula
	
	\noindent\textbf{Mathematics Subject Classification:} 05C30, 05C50
	
	\section{Introduction}
	All graphs considered in this paper are loopless, while parallel edges are allowed. For a graph $G$, we use $\mathcal{T}(G)$ to denote the set of spanning trees of $G$. Let $\tau(G)=|\mathcal{T}(G)|$, that is, the number of spanning trees of $G$. 
For a subgraph $H$ of $G$, we use $\mathcal{T}_H(G)$ to denote all spanning trees of $G$ that contain all edges in $H$. 
Accordingly, we write $\tau_H(G)=|\mathcal{T}_H(G)|$. By adding isolated vertices if necessary, we may safely assume that $H$ is a spanning subgraph. 
Note that if $H$ is the empty graph, then  $\mathcal{T}_H(G)$ coincides with $\mathcal{T}(G)$ and hence  $\tau_H(G)=\tau(G)$.
	
	Counting spanning trees in graphs is a classic problem in graph theory and has a close connection with many other fields in mathematics, 
statistical physics and theoretical computer sciences. See, for example, some recent work \cite{dong2022,li2023,li2025,gong2018,dong2017}.
	
	The celebrated Cayley's formula \cite{cayley1889} states that $\tau(K_n)=n^{n-2}$. 
For a complete bipartite graph, Fiedler and  Sedl\'a\v{c}ek \cite{fiedler1958} showed that \begin{equation}\label{b}
	\tau(K_{n_1,n_2})=n_1^{n_2-1}n_2^{n_1-1}.
	\end{equation} This result was further extended,  by various authors using different means \cite{austin1960,lewis1999,biggs1993,klee2019} to complete multipartite  graphs as follows:
	\begin{equation*}
	\tau(K_{n_1,n_2,\ldots,n_s})=n^{s-2}\prod_{i=1}^s(n-n_i)^{n_i-1},
	\end{equation*}
	where $s\ge 2$ and $n=n_1+\cdots+n_s$.
	
	Let $F$ be a spanning forest of a complete graph $K_n$ whose components are $T_1,\ldots, T_c$. Moon \cite{moon1964, moon1967} proved that 
	\begin{equation*}
		\tau_F(K_n)=n^{c-2}\prod_{p=1}^{c}m_p,
	\end{equation*}
	where $m_p=|V(T_p)|$ is the order of $T_p$. We note that if $F$ is empty, then Moon's formula reduces to Cayley's formula. 
	
	The Moon-type formula for complete bipartite graphs was found by Dong and Ge \cite{dong2022}. Explicitly, it states that, for a given spanning forest $F$ of $K_{n_1,n_2}$ whose components are $T_1,\ldots,T_c$,
	\begin{equation}\label{m2}
		\tau_F(K_{n_1,n_2})=\frac{1}{n_1n_2}\left(\prod_{p=1}^c\left(n_{1p}n_2+n_{2p}n_1\right)\right)\left(1-\sum_{p=1}^c \frac{n_{1p}n_{2p}}{n_{1p}n_2+n_{2p}n_1}\right),
	\end{equation} 
	where $n_{ip}=|X_i\cap V(T_p)|$ and $(X_1,X_2)$ is the bipartition of $K_{n_1,n_2}$ with $|X_i|=n_i$ for $i=1,2$. For the trivial case that $F$ is empty, we see that $(n_{1p},n_{2p})$ equals $(1,0)$ or $(0,1)$ and hence   Eq.~\eqref{m2} reduces to Eq.~\eqref{b}.
	
	The original proof of Eq.~\eqref{m2} given by Dong-Ge \cite{dong2022} is rather technical,  which is based on the Inclusion-Exclusion Principle and various complex algebraic identities. Two other simple proofs of Eq.~\eqref{m2} were obtained by Li-Chen-Yan \cite{li2023} and Li-Yan \cite{li2025} using the mesh-star transformation and a variant of the Teufl-Wagner formula, respectively. In particular, Li, Chen and Yan \cite{li2023} obtained a Moon-type formula for complete $s$-partite graphs. They showed that  $\tau_F(K_{n_1,\ldots,n_s})$ (where $F$ is a spanning forest) can be expressed by the sum of weights of spanning trees of a particular edge-weighted complete graph $K_s$. A much shorter proof of Eq.~\eqref{m2} was found by Ge \cite{ge2025} recently, using a form of Matrix Tree Theorem due to Klee and Stamps \cite{klee2019} based on the Matrix Determinant Lemma.

	The current paper is a natural extension of the algebraic techniques developed by Klee-Stamps \cite{klee2019} and Ge \cite{ge2025}.  A key finding  is that the number of spanning trees of a complete $s$-partite graph containing a fixed spanning forest is closely related to the determinant of a diagonal matrix by a rank-$s$ update.
	We utilize the Generalized Matrix Determinant Lemma to analyze the characteristic polynomial of the Laplacian matrix. This allows us to derive a compact determinantal formula, which can be seen as a variant of the Li-Chen-Yan formula  in \cite{li2023}.
	
	\section{Preliminaries}
	
For a graph $G$ on vertex set $\{v_1,\ldots,v_n\}$ (parallel edges allowed), let $L(G)$ be its Laplacian matrix, i.e., the diagonal entry $l_{ii}$ is the number of edges incident to $v_i$ and the off-diagonal entry $l_{ij}$ is the opposite of the number of the edges between $v_i$ and $v_j$. We use $\Phi(G;x)=\det(xI_n-L(G))$ to denote the Laplacian characteristic polynomial of $G$. For a matrix $M$, we use $\adj M$ to denote the adjugate matrix of $M$, i.e., the $(i,j)$ entry of $\adj M$ is the cofactor  corresponding to the $(j,i)$ entry of $M$. The all-ones vector and all-ones matrix will be denoted by $\mathbf{e}$ and $J$, respectively.
	
	\begin{lemma}[Matrix-Tree Theorem \cite{kirchhoff1847,biggs1993}] \label{mtt}
		Every cofactor of $L(G)$ is equal to the number of spanning trees of $G$, that is, $\adj L(G)=\tau(G)J$.
		\end{lemma}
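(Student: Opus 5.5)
We prove the statement in two parts. First, every principal cofactor of $L(G)$ equals $\tau(G)$; this is the classical Kirchhoff argument. Second, the full adjugate is $\tau(G)J$, which follows from linear algebra and the row/column sums of $L=L(G)$.

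For the principal cofactors, fix $i$ and let $L_i$ be $L$ with row and column $i$ deleted. Choose an arbitrary orientation of each edge. Let $N$ be the $n\times m$ signed incidence matrix, so that $L=NN^{T}$; parallel edges simply contribute repeated columns. Let $N_i$ be $N$ with row $i$ deleted. Then $L_i=N_iN_i^{T}$, and the Cauchy--Binet formula gives
\[
\det L_i=\sum_{S}\det(N_i[S])^2,
\]
where $S$ ranges over $(n-1)$-subsets of edges. The key step is to show that $\det N_i[S]=\pm1$ when $S$ is the edge set of a spanning tree, and $0$ otherwise.

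\begin{itemize}
\item If $S$ contains a cycle, a signed sum of the corresponding columns vanishes, so the determinant is $0$.
\item If $S$ is acyclic with $n-1$ edges, it is a spanning tree. Some leaf $v\neq v_i$ then has a row with a single entry $\pm1$, and expanding along that row reduces to a smaller tree rooted at $v_i$, by induction.
\end{itemize}

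Hence $\det L_i=\tau(G)$. Establishing this unimodularity of $N_i[S]$ is the main technical step; the rest of the argument is formal.

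For the non-principal cofactors, note that $L\mathbf{e}=0$ and $\mathbf{e}^{T}L=0$. Suppose first that $G$ is connected. Then $L$ has rank $n-1$, since its kernel is spanned by $\mathbf{e}$. The identities $L\,\adj L=\adj L\,L=(\det L)I=0$ imply that every column and every row of $\adj L$ lies in the kernel. Thus every column and every row of $\adj L$ is a multiple of $\mathbf{e}$, so $\adj L=cJ$ for some scalar $c$. Comparing a diagonal entry with the principal cofactor computed above gives $c=\tau(G)$.

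If $G$ is disconnected, then $\tau(G)=0$ and the rank of $L$ is at most $n-2$. In that case every $(n-1)$-minor vanishes, so $\adj L=0=\tau(G)J$.

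This completes the proof that $\adj L(G)=\tau(G)J$.
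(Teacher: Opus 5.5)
The paper gives no proof of this lemma; it simply quotes the classical Matrix-Tree Theorem from Kirchhoff and Biggs. Your argument is correct and is essentially the standard textbook proof, as in Biggs's book. You apply Cauchy--Binet to $L_i=N_iN_i^{\top}$, using that an $(n-1)\times(n-1)$ submatrix of the reduced incidence matrix has determinant $\pm1$ exactly when its columns form a spanning tree and $0$ otherwise. You then use a rank argument to pass from the principal cofactors to the whole adjugate.

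Your second half is exactly the pattern the paper itself later applies to $Z(0)$ in Lemma~\ref{rz} and Proposition~\ref{stradj}. A matrix of corank one with known right and left null vectors has adjugate equal to a scalar times (right null vector)(left null vector)$^{\top}$, and the scalar is read off from a single cofactor. For $L$ both null vectors are $\mathbf{e}$, which gives $cJ$.

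One optional tightening: you use two graph facts without proof. These are that $\ker L$ is spanned by $\mathbf{e}$ when $G$ is connected, and that $\operatorname{rank} L\le n-2$ when $G$ is disconnected. Both are easy, but you can bypass them by splitting on the rank of $L$ instead of on connectivity.
\begin{itemize}
\item If $\operatorname{rank} L\le n-2$, every $(n-1)$-minor vanishes, so $\adj L=0$. In particular every principal cofactor is $0$, and your first part gives $\tau(G)=0$.
\item If $\operatorname{rank} L=n-1$, the kernel is one-dimensional and contains $\mathbf{e}$. So $\adj L=cJ$ with $c$ equal to a principal cofactor, i.e.\ $c=\tau(G)$.
\end{itemize}
This makes the second half purely linear-algebraic, with all the graph theory confined to the unimodularity computation.
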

	Let $a_1=\frac{\mathrm{d}}{\mathrm{d}x}\Phi(G;x)|_{x=0}$, that is, $a_1$ is the linear coefficient of $\Phi(G;x)$. Noting that $(-1)^{n-1}a_1$ is equal to the sum of the $n$ diagonal entries of $\adj L(G)$, the following corollary follows immediately from Lemma \ref{mtt}.
	\begin{corollary}\label{df}
	\begin{equation}	\tau(G) = \frac{(-1)^{n-1}}{n} \cdot \left. \frac{\mathrm{d}}{\mathrm{d}x}\Phi(G;x) \right|_{x=0}.
			\end{equation}
	\end{corollary}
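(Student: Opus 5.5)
The corollary follows from Jacobi's formula for the derivative of a determinant together with Lemma~\ref{mtt}. Write $M(x)=xI_n-L(G)$, so $\Phi(G;x)=\det M(x)$ and $M'(x)=I_n$. Jacobi's formula $\frac{\mathrm{d}}{\mathrm{d}x}\det M(x)=\tr\bigl(\adj M(x)\,M'(x)\bigr)$ then gives
\begin{equation*}
\frac{\mathrm{d}}{\mathrm{d}x}\Phi(G;x)=\tr\bigl(\adj(xI_n-L(G))\bigr).
\end{equation*}
Setting $x=0$ yields $\frac{\mathrm{d}}{\mathrm{d}x}\Phi(G;x)\big|_{x=0}=\tr\bigl(\adj(-L(G))\bigr)$.

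Next I would pull the sign out of the adjugate. Each entry of $\adj(-L(G))$ is, up to the same sign pattern, the determinant of an $(n-1)\times(n-1)$ submatrix of $-L(G)$, and negating an $(n-1)\times(n-1)$ matrix multiplies its determinant by $(-1)^{n-1}$. Hence $\adj(-L(G))=(-1)^{n-1}\adj L(G)$.

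By Lemma~\ref{mtt}, $\adj L(G)=\tau(G)J$, whose trace is $n\,\tau(G)$. Combining the three steps,
\begin{equation*}
\left.\frac{\mathrm{d}}{\mathrm{d}x}\Phi(G;x)\right|_{x=0}=(-1)^{n-1}\,n\,\tau(G).
\end{equation*}
Multiplying both sides by $(-1)^{n-1}/n$ gives the stated formula.

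No step is a real obstacle. The only place requiring care is the sign bookkeeping in $\adj(-L)=(-1)^{n-1}\adj L$.

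If one prefers to avoid Jacobi's formula, expand $\det(xI_n-L)$ directly. The coefficient of $x$ is the sum, over $i$, of the principal $(n-1)\times(n-1)$ minors of $-L$ obtained by deleting row and column $i$. That sum is exactly the trace of $\adj(-L)$, so the argument proceeds identically from there.
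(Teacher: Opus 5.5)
Your proof is correct and is essentially the paper's argument: the paper notes that the linear coefficient of $\Phi(G;x)$ equals $(-1)^{n-1}$ times the sum of the diagonal entries of $\adj L(G)$, and then applies Lemma~\ref{mtt}. Your use of Jacobi's formula with $M'(x)=I_n$, together with the sign identity $\adj(-L(G))=(-1)^{n-1}\adj L(G)$, is just a derivation of that same coefficient fact, as your closing remark already observes.
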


	Let $G$ be a graph and $F$ be a spanning forest of $G$ whose components are $T_1,\ldots, T_c$.  We use $G/F$ to denote the graph obtained from $G$ by contracting all edges in $F$, and removing all loops. In other words, the vertices of $G/F$ are $T_1,\ldots, T_c$, and two vertices $V_p$ and $V_q$ are connected by $|E(V(T_p),V(T_q) )|$ edges, where $E(V(T_p),V(T_q))$ collects all edges in $G$ that have one end in $V(T_p)$ and the other in $V(T_q)$. Clearly, there exists a natural one-to-one correspondence between $\mathcal{T}_F(G)$ and $\mathcal{T}(G/F)$. Thus, $\tau_F(G)=\tau(G/F)$ and  hence Corollary \ref{df} implies the following formula for $\tau_F(G)$.
	\begin{corollary}\label{taud}
		\begin{equation}
		\tau_F(G)= \frac{(-1)^{c-1}}{c} \cdot \left. \frac{\mathrm{d}}{\mathrm{d}x}\Phi(G/F;x) \right|_{x=0}.
		\end{equation}
	\end{corollary}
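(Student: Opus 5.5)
The statement is Corollary \ref{taud}. It follows directly from the contraction identity $\tau_F(G)=\tau(G/F)$ combined with Corollary \ref{df} applied to the graph $G/F$. So the plan is in two steps: first justify the bijection between $\mathcal{T}_F(G)$ and $\mathcal{T}(G/F)$, then substitute.

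\textbf{Step 1: the bijection.} Regard $G/F$ as a multigraph on the vertex set $\{T_1,\ldots,T_c\}$. Each edge of $G/F$ is identified with an edge of $G$ whose ends lie in different components of $F$. Edges of $G$ with both ends in the same $T_p$ become loops under contraction and are deleted.

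Let $T\in\mathcal{T}_F(G)$. Then $T\supseteq F$, and the edges of $T\setminus E(F)$ join distinct components of $F$; otherwise they would close a cycle with a path inside some $T_p$. Hence $T\setminus E(F)$ is a set of $(n-1)-(n-c)=c-1$ edges of $G/F$.

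This edge set is connected in $G/F$, because $T$ is connected. A connected graph on $c$ vertices with $c-1$ edges is a spanning tree.

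Conversely, let $S$ be a spanning tree of $G/F$ and consider $E(F)\cup S$ in $G$. This is a spanning subgraph with $(n-c)+(c-1)=n-1$ edges. It is connected, since each $T_p$ is connected and $S$ connects the $T_p$'s. It is therefore a spanning tree containing $F$. The two maps are mutually inverse, so $\tau_F(G)=\tau(G/F)$.

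Parallel edges cause no difficulty. Distinct edges of $G$ between $V(T_p)$ and $V(T_q)$ remain distinct parallel edges of $G/F$, and the paper explicitly allows parallel edges. For the same reason the Matrix-Tree Theorem (Lemma \ref{mtt}) still applies to $G/F$.

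\textbf{Step 2: apply Corollary \ref{df}.} The graph $G/F$ is loopless, possibly has parallel edges, and has exactly $c$ vertices. Corollary \ref{df} therefore gives
\[
\tau(G/F)=\frac{(-1)^{c-1}}{c}\left.\frac{\mathrm{d}}{\mathrm{d}x}\Phi(G/F;x)\right|_{x=0}.
\]
Combining this with Step 1 yields the claimed formula.

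\textbf{Main obstacle.} The only point needing care is the bijection, specifically the check that the edges of $T$ outside $F$ never lie inside a single component. The acyclicity argument above settles it. Everything else is direct substitution.
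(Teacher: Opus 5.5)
Your proposal is correct and follows the same route as the paper: establish $\tau_F(G)=\tau(G/F)$ via the natural bijection $\mathcal{T}_F(G)\leftrightarrow\mathcal{T}(G/F)$, then apply Corollary \ref{df} to the $c$-vertex multigraph $G/F$. The only difference is that you verify the bijection in detail (edge count, acyclicity, connectivity), whereas the paper simply calls it clear.
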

Our derivation of the explicit formula for $\tau_F(G)$ relies on analyzing the structure of the characteristic polynomial via matrix calculus. The following two standard results from matrix analysis---the Generalized Matrix Determinant Lemma and Jacobi's formula---are the key algebraic tools we will employ to compute the characteristic polynomial and its derivative.
\begin{lemma}[Generalized Matrix Determinant Lemma {\cite[Sec.~18.1]{harville2008}}] \label{gmdl}
	Let $A$ be an invertible $n \times n$ matrix, and let $U, V$ be $n \times s$ and $s\times n$ matrices, respectively. Then
	\begin{equation*}
		\det(A + UV) = \det(A) \det(I_s + V A^{-1} U).
	\end{equation*}
\end{lemma}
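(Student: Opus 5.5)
The identity will follow from a block factorization that separates $A$ from the low-rank part. Throughout, $A$ is invertible and $U$, $V$ are $n\times s$ and $s\times n$. The natural object is the $(n+s)\times(n+s)$ block matrix
\begin{equation*}
M=\begin{pmatrix} A & -U\\ V & I_s\end{pmatrix}.
\end{equation*}
Its determinant can be computed in two ways, by eliminating either the off-diagonal block $V$ or the block $-U$ with block-triangular multiplications. Each elimination leaves a product of two determinants, and equating the two results gives the lemma.

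\textbf{First elimination.} I clear the block $V$ using the pivot $A$. Multiplying on the left by the unit lower block-triangular matrix $\begin{pmatrix} I_n & 0\\ -VA^{-1} & I_s\end{pmatrix}$, which has determinant $1$, gives
\begin{equation*}
\begin{pmatrix} I_n & 0\\ -VA^{-1} & I_s\end{pmatrix} M=\begin{pmatrix} A & -U\\ 0 & I_s+VA^{-1}U\end{pmatrix}.
\end{equation*}
The right-hand side is block upper triangular. Hence $\det M=\det(A)\det(I_s+VA^{-1}U)$.

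\textbf{Second elimination.} I clear the block $-U$ using the pivot $I_s$. Multiplying on the right by the unit upper block-triangular matrix $\begin{pmatrix} I_n & 0\\ -V & I_s\end{pmatrix}$, also of determinant $1$, gives
\begin{equation*}
M\begin{pmatrix} I_n & 0\\ -V & I_s\end{pmatrix}=\begin{pmatrix} A+UV & -U\\ 0 & I_s\end{pmatrix}.
\end{equation*}
This is again block upper triangular, so $\det M=\det(A+UV)$. Equating the two expressions for $\det M$ yields the claimed identity.

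The only delicate points are bookkeeping. First, the sign convention $-U$ in $M$ is what makes the Schur complement $I_s+VA^{-1}U$ and the sum $A+UV$ come out with plus signs. Second, the step that determinants of block triangular matrices factor as the product of the determinants of their diagonal blocks needs justification, for instance by Laplace expansion or by noting it holds for square diagonal blocks. Neither step is a real obstacle; the main thing to verify carefully is the product of block matrices in each elimination.
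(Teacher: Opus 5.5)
Your proof is correct: computing $\det\begin{pmatrix} A & -U\\ V & I_s\end{pmatrix}$ in two ways by block elimination is the standard Schur-complement argument. The paper gives no proof of its own and only cites Harville, where the identity is obtained from the determinant of a partitioned matrix in essentially this way. There is one cosmetic slip in your second elimination. The factor $\begin{pmatrix} I_n & 0\\ -V & I_s\end{pmatrix}$ is block \emph{lower} triangular, and it clears the block $V$ (by column operations against the pivot $I_s$), not the block $-U$. Your displayed product $\begin{pmatrix} A+UV & -U\\ 0 & I_s\end{pmatrix}$ and the conclusion $\det M=\det(A+UV)$ are nonetheless right.
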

	
	\begin{lemma}[Jacobi's formula {\cite[Sec.~15.8]{harville2008}}] \label{jac}
		Let $A(x)$ be a differentiable matrix function. Then
		\begin{equation*}
			\frac{\mathrm{d}}{\mathrm{d}x} \det A(x)  = \tr\left(\adj(A(x)) \frac{\mathrm{d}A(x)}{\mathrm{d}x}\right).
		\end{equation*}
	\end{lemma}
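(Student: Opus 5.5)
The plan is to prove Jacobi's formula directly from the Leibniz expansion of the determinant, using multilinearity in the columns and then cofactor expansion. Write $A(x)=(a_{ij}(x))$ with each entry differentiable, and let $C_{ij}(x)$ denote the cofactor of the $(i,j)$ entry of $A(x)$. By the convention fixed in the Preliminaries, the $(j,i)$ entry of $\adj A(x)$ is $C_{ij}(x)$.

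First, since
\begin{equation*}
\det A(x)=\sum_{\sigma}\operatorname{sgn}(\sigma)\prod_{k=1}^n a_{\sigma(k)k}(x)
\end{equation*}
is a finite sum of products of differentiable functions, it is differentiable. The product rule then gives
\begin{equation*}
\frac{\mathrm{d}}{\mathrm{d}x}\det A(x)=\sum_{j=1}^n \det A^{(j)}(x),
\end{equation*}
where $A^{(j)}(x)$ is obtained from $A(x)$ by replacing its $j$-th column with the derivative of that column. Equivalently, for each permutation $\sigma$, the product rule selects exactly one factor $a_{\sigma(j)j}$ to differentiate, and regrouping by $j$ gives the determinant of $A^{(j)}$.

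Next, expand $\det A^{(j)}(x)$ along its $j$-th column. The cofactors attached to the entries of column $j$ depend only on the other columns, and these coincide with those of $A(x)$. Hence
\begin{equation*}
\det A^{(j)}(x)=\sum_{i=1}^n a_{ij}'(x)\,C_{ij}(x).
\end{equation*}
Summing over $j$ yields
\begin{equation*}
\frac{\mathrm{d}}{\mathrm{d}x}\det A(x)=\sum_{i,j}C_{ij}(x)\,a'_{ij}(x).
\end{equation*}

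Finally, compute the trace:
\begin{equation*}
\tr\left(\adj(A)\,A'\right)=\sum_{j}\sum_{i}(\adj A)_{ji}\,a'_{ij}=\sum_{i,j}C_{ij}\,a'_{ij},
\end{equation*}
which matches the expression above. Note that no invertibility of $A(x)$ is needed, which matters because the formula will be applied at $x=0$, where the Laplacian-type matrix is singular.

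The only delicate point is index bookkeeping. We must check that the transpose built into the definition of $\adj$ is exactly what turns the double sum $\sum C_{ij}a'_{ij}$ into a trace of the product $\adj(A)\,A'$ in that order, rather than $A'\,\adj(A)$. The trace is cyclic, so both orders give the same value, but it is cleaner to verify the stated order directly.
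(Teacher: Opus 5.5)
Your proof is correct. The product rule on the Leibniz expansion, the column cofactor expansion of $\det A^{(j)}$, and the identification $\sum_{i,j}C_{ij}a'_{ij}=\tr(\adj(A)A')$ under the paper's convention $(\adj A)_{ji}=C_{ij}$ all check out. This is essentially the standard cofactor-expansion argument (equivalently, $\partial\det A/\partial a_{ij}=C_{ij}$ plus the chain rule) behind the result, which the paper does not prove but simply cites from Harville.

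One small correction to your closing remark: the formula is applied to $Z(x)$ at $x=0$, and the singular matrix there is $Z(0)$ (Proposition~\ref{Zsin}), not a Laplacian. That singularity is exactly why your invertibility-free argument is the appropriate one.
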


	\section{The structure of $L(K_{n_1,\ldots,n_s}/F)$}
		Let  $X_1, \dots, X_s$ be the partition sets of $ K_{n_1, \dots, n_s}$ ($s\ge 2$)  with $|X_i|=n_i\geq 1$ for $i=1,\ldots,s$. Suppose $F$ is a spanning forest of $K_{n_1, \dots, n_s}$ with $c$ components $T_1,\ldots,T_c$.  We define an $s\times c$ matrix $N$, whose $(i,p)$ entry is 
		\begin{equation*}
			n_{ip}=|X_i\cap V(T_p)|,
		\end{equation*}
		that is, $n_{ip}$ is the number of the vertices of $X_i$ lying in the component $T_p$. For convenience, set
		\begin{equation*}
			n=\sum_{i=1}^s n_i, \quad m_p=|V(T_p)|, \quad\text{and}\quad \alpha_p=nm_p-\sum_{i=1}^s n_i n_{ip} \text{~for~} p=1,\ldots,c.
		\end{equation*}
		The following identities are clear from the definitions.
				\begin{equation}\label{rfp}
			\sum_{p=1}^{c}m_p=\sum_{i=1}^s n_i=n
		,\quad
			\sum_{p=1}^c n_{ip}=n_i, \quad \text{and}\quad \sum_{i=1}^{s}n_{ip}=m_p.
			\end{equation}
			
We present two equivalent descriptions of $\alpha_p$.
			\begin{lemma}\label{pos}
				$\alpha_p=\sum_{i=1}^s n_i(m_p-n_{ip})=\sum_{i=1}^s(n-n_i)n_{ip}$. In particular, $\alpha_p>0$  for each $p\in\{1,\ldots,c\}$.
			\end{lemma}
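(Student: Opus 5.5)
The plan is to verify the two expressions by substituting the identities in \eqref{rfp} into the definition $\alpha_p=nm_p-\sum_{i=1}^s n_in_{ip}$, and then to read off positivity from the second expression.

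For the first equality, I will write $nm_p=\left(\sum_{i=1}^s n_i\right)m_p=\sum_{i=1}^s n_im_p$, using $n=\sum_i n_i$. Subtracting $\sum_i n_in_{ip}$ term by term then gives $\alpha_p=\sum_{i=1}^s n_i(m_p-n_{ip})$.

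For the second equality, I will instead write $nm_p=n\sum_{i=1}^s n_{ip}=\sum_{i=1}^s n\,n_{ip}$, using $m_p=\sum_i n_{ip}$ from \eqref{rfp}. This gives $\alpha_p=\sum_{i=1}^s (n-n_i)n_{ip}$.

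For positivity, I will use the second form. Each term $(n-n_i)n_{ip}$ is nonnegative, since $n_{ip}\ge 0$ and $n-n_i\ge 0$. Moreover $n-n_i\ge 1$ for every $i$: because $s\ge 2$ and every $n_j\ge 1$, we have $n-n_i=\sum_{j\ne i}n_j\ge 1$. Since $T_p$ is a component, it has at least one vertex, so $m_p=\sum_i n_{ip}\ge 1$ and some $n_{ip}\ge 1$. The corresponding term is therefore at least $1$, and so $\alpha_p>0$.

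The only step needing care is this positivity argument, specifically the use of $s\ge2$ and $n_j\ge1$ to guarantee $n-n_i>0$. Everything else is routine rearrangement.
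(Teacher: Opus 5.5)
Your proposal is correct and follows essentially the same route as the paper: expand $nm_p$ once via $n=\sum_i n_i$ and once via $m_p=\sum_i n_{ip}$, then read positivity off the second form. The paper phrases the last step as $\sum_i (n-n_i)n_{ip}\ge \sum_i n_{ip}=m_p>0$, which uses $n-n_i\ge 1$ tacitly; your argument states explicitly that this needs $s\ge 2$ and every $n_j\ge 1$.
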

			\begin{proof}
				As $n=\sum_{i=1}^s n_i$, we obtain 
				\begin{equation*}
					\alpha_p=nm_p-\sum_{i=1}^s n_i n_{ip} =\sum_{i=1}^s n_im_p-\sum_{i=1}^s n_i n_{ip} =\sum_{i=1}^sn_i(m_p-n_{ip}).
					\end{equation*}
					Similarly, from the equality $m_p=\sum_{i=1}^s n_{ip}$, we see that  \begin{equation*}
				\alpha_p=n \sum_{i=1}^s n_{ip}-\sum_{i=1}^s n_i n_{ip} =\sum_{i=1}^s(n-n_i) n_{ip}\ge \sum_{i=1}^s n_{ip}=m_p>0.
				\end{equation*}
				This completes the proof.
			\end{proof}
		A key fact is that the Laplacian matrix of the graph $K_{n_1,\ldots,n_s}/F$ can be written as a rank-$s$ perturbation of a diagonal matrix. 
We state this fact in the proposition below.
\begin{proposition}\label{Ls}
	Using the notation above,
	\begin{equation*}
		L(K_{n_1,\ldots,n_s}/F) = \operatorname{diag}(\alpha_1,\ldots, \alpha_c) + N^{\top}(I_s - J_s)N.
	\end{equation*}
\end{proposition}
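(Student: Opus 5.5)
The plan is to compute both sides entrywise and compare them, treating the off-diagonal entries first and then the diagonal ones. The main tool is the block description of $K_{n_1,\ldots,n_s}$: two vertices are adjacent exactly when they lie in different parts $X_i \neq X_j$.

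First I will write out the matrix on the right. Since $(I_s-J_s)_{ij} = \delta_{ij}-1$, the $(p,q)$ entry of $N^{\top}(I_s-J_s)N$ is
\[
\sum_{i,j} n_{ip}(\delta_{ij}-1)n_{jq} \;=\; \sum_{i=1}^s n_{ip}n_{iq} \;-\; m_p m_q ,
\]
where the second term uses the identities \eqref{rfp}, namely $\sum_i n_{ip} = m_p$.

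Next, for $p \neq q$, I will count the edges of $K_{n_1,\ldots,n_s}$ between $V(T_p)$ and $V(T_q)$. The total number of pairs is $m_p m_q$, and exactly $\sum_i n_{ip}n_{iq}$ of these pairs lie in a common part and so are non-edges. The number of edges is therefore $m_p m_q - \sum_i n_{ip}n_{iq}$. Its negative is the $(p,q)$ entry of $L(K_{n_1,\ldots,n_s}/F)$, and this matches the computation above because the diagonal matrix contributes nothing off the diagonal.

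For the diagonal entry $(p,p)$, the degree of the vertex $T_p$ in the contracted graph is the number of edges with exactly one end in $V(T_p)$; loops are discarded. I will compute this as the sum of the degrees in $K_{n_1,\ldots,n_s}$ of the vertices of $T_p$, minus twice the number of edges inside $V(T_p)$.
\begin{itemize}
\item A vertex in $X_i$ has degree $n-n_i$, so the degree sum is $\sum_i n_{ip}(n-n_i) = \alpha_p$, by Lemma~\ref{pos}.
\item The number of edges inside $V(T_p)$ is $\tfrac12\bigl(m_p^2 - \sum_i n_{ip}^2\bigr)$, since the non-adjacent ordered pairs (including pairs of equal vertices) are exactly those within a common part.
\end{itemize}
This gives the diagonal entry $\alpha_p + \sum_i n_{ip}^2 - m_p^2$, which agrees with $\alpha_p$ plus the $(p,p)$ entry computed in the first step.

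The only delicate point is the diagonal. Internal edges of $V(T_p)$ include not just the edges of $F$ but all edges of $K_{n_1,\ldots,n_s}$ inside the component, and all of them become loops under contraction. The count of edges inside $V(T_p)$ above is meant to handle exactly this, so I will take care there to subtract every internal edge rather than only those of $F$.
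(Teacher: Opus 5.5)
Your proposal is correct and is essentially the paper's proof: an entrywise comparison that uses the same count $m_pm_q-\sum_i n_{ip}n_{iq}$ for the off-diagonal entries. The only variation is on the diagonal, where the paper sums $w_{pq}$ over $q\neq p$, while you subtract twice the $\tfrac12\bigl(m_p^2-\sum_i n_{ip}^2\bigr)$ internal edges from the degree sum $\alpha_p$; this has the small bonus of showing that $\alpha_p$ is the total degree of $V(T_p)$ in $K_{n_1,\ldots,n_s}$.
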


\begin{proof}
	We prove this by comparing the corresponding entries of $L = L(K_{n_1,\ldots,n_s}/F)$ and $R = \operatorname{diag}(\alpha_1,\ldots, \alpha_c) + N^{\top}(I_s - J_s)N$, both of which are square matrices of order $c$. Let $l_{pq}$ and $r_{pq}$ denote the $(p,q)$ entries of $L$ and $R$, respectively. 
	
	For distinct $p,q \in \{1,\ldots,c\}$, let $w_{pq} = |E(V(T_p), V(T_q))|$ be the number of edges in $K_{n_1,\ldots,n_s}$ connecting a vertex in $T_p$ to a vertex in $T_q$. By the definition of the  graph $K_{n_1,\ldots,n_s}/F$, we have
	\begin{equation}\label{lp}
		l_{pq} = -w_{pq} \quad \text{for } p \neq q, \quad \text{and} \quad l_{pp} = \sum_{q \neq p} w_{pq}.
	\end{equation}
	
	Recall that in a complete multipartite graph, two vertices are adjacent if and only if they belong to different partitions. Thus, $w_{pq}$ is the total number of pairs $(u,v)$ with $u \in V(T_p), v \in V(T_q)$ minus those pairs belonging to the same partition:
	\begin{equation}\label{wp}
		w_{pq} = m_p m_q - \sum_{i=1}^{s} n_{ip} n_{iq} \quad \text{for } p \neq q.
	\end{equation}	
	On the other hand, a direct calculation shows that each off-diagonal entry $r_{pq}$ satisfies
	\begin{align}\label{rpq}
		r_{pq} &= (n_{1p},\ldots,n_{sp})(I_s - \mathbf{e}\mathbf{e}^{\top})(n_{1q},\ldots,n_{sq})^{\top} \nonumber \\
		&= \sum_{i=1}^{s} n_{ip} n_{iq} - \left( \sum_{i=1}^{s} n_{ip} \right) \left( \sum_{i=1}^{s} n_{iq} \right) \nonumber \\
		&= \sum_{i=1}^{s} n_{ip} n_{iq} - m_p m_q \nonumber \\
		&= -w_{pq}\nonumber\\
		&= l_{pq}.
	\end{align}
	This verifies that $L$ and $R$ have the same off-diagonal entries. It remains to show that $l_{pp} = r_{pp}$ for each $p$. From Eqs.~\eqref{lp} and \eqref{wp}, we have
	\begin{align}
		l_{pp} &= \sum_{q \neq p} w_{pq}\nonumber\\
		& = \sum_{q \neq p} \left( m_p m_q - \sum_{i=1}^{s} n_{ip} n_{iq} \right) \nonumber \\
		&= m_p \sum_{q \neq p} m_q - \sum_{i=1}^{s} n_{ip} \sum_{q \neq p} n_{iq} \nonumber \\
		&= m_p(n - m_p) - \sum_{i=1}^{s} n_{ip}(n_i - n_{ip}), \nonumber 
	\end{align}
	where the last equality follows from Eq.~\eqref{rfp}. Similarly to the derivation of Eq.~\eqref{rpq}, we have
	\begin{align}
		r_{pp} &= \alpha_p + (n_{1p},\ldots,n_{sp})(I_s - \mathbf{e}\mathbf{e}^{\top})(n_{1p},\ldots,n_{sp})^{\top} \nonumber \\
		&= \left( n m_p - \sum_{i=1}^s n_i n_{ip} \right) + \left( \sum_{i=1}^s n_{ip}^2 - \left( \sum_{i=1}^{s} n_{ip} \right)^2 \right) \nonumber \\
		&= n m_p - \sum_{i=1}^s n_i n_{ip} + \sum_{i=1}^s n_{ip}^2 - m_p^2 \nonumber \\
		&= m_p(n - m_p) - \sum_{i=1}^s n_{ip}(n_i - n_{ip}) \nonumber \\
		&= l_{pp}. \nonumber
	\end{align}
	This shows that $L$ and $R$ also share the same diagonal entries. Thus $L = R$, which completes the proof.
\end{proof}
\begin{proposition}\label{lcp}
	The Laplacian characteristic polynomial of $K_{n_1,\ldots,n_s}/F$ is given by
	\begin{equation*}
		\Phi(x) = \left( \prod_{p=1}^c (x - \alpha_p) \right) \det (I_s + Y(x)),
	\end{equation*}
	where $Y(x)$ is an $s \times s$ matrix whose $(i,j)$ entry is 
	\begin{equation}\label{yij}
		y_{ij} = \sum_{p=1}^c \frac{n_{jp}(m_p - n_{ip})}{x - \alpha_p}.
	\end{equation}
\end{proposition}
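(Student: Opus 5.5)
We view $xI_c-L(K_{n_1,\ldots,n_s}/F)$ as a low-rank update of a diagonal matrix and apply the Generalized Matrix Determinant Lemma (Lemma~\ref{gmdl}). By Proposition~\ref{Ls}, and writing $D=\operatorname{diag}(\alpha_1,\ldots,\alpha_c)$,
\begin{equation*}
xI_c-L(K_{n_1,\ldots,n_s}/F)=(xI_c-D)+\bigl(-N^{\top}\bigr)\bigl((I_s-J_s)N\bigr).
\end{equation*}
Take $A=xI_c-D$, $U=-N^{\top}$ (a $c\times s$ matrix) and $V=(I_s-J_s)N$ (an $s\times c$ matrix). Then $\Phi(x)=\det(A+UV)$.

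First restrict to real $x\notin\{\alpha_1,\ldots,\alpha_c\}$, so that $A$ is invertible with $\det A=\prod_{p=1}^c(x-\alpha_p)$ and $A^{-1}=\operatorname{diag}\bigl((x-\alpha_p)^{-1}\bigr)$. Lemma~\ref{gmdl} gives
\begin{equation*}
\Phi(x)=\prod_{p=1}^c(x-\alpha_p)\cdot\det\bigl(I_s-(I_s-J_s)NA^{-1}N^{\top}\bigr)=\prod_{p=1}^c(x-\alpha_p)\cdot\det\bigl(I_s+(J_s-I_s)NA^{-1}N^{\top}\bigr).
\end{equation*}
So it remains to identify $Y(x)=(J_s-I_s)NA^{-1}N^{\top}$ entrywise.

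The $(k,j)$ entry of $NA^{-1}N^{\top}$ is $\sum_{p=1}^c n_{kp}n_{jp}/(x-\alpha_p)$. Left-multiplying by $J_s-I_s$ sums this over all $k\neq i$, which gives
\begin{equation*}
\sum_{p=1}^c\frac{n_{jp}}{x-\alpha_p}\sum_{k\neq i}n_{kp}.
\end{equation*}
By Eq.~\eqref{rfp}, $\sum_{k\neq i}n_{kp}=m_p-n_{ip}$, so this is exactly $y_{ij}$ in Eq.~\eqref{yij}.

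Finally, both sides of the claimed identity are rational functions of $x$. They agree at all but finitely many points, hence they agree as rational functions. The left side is a polynomial, so the right side extends to it: the poles of $\det(I_s+Y(x))$ are cancelled by the prefactor. I expect no real obstacle beyond two points: keeping track of the sign when choosing $U$ and $V$, so that $I_s-J_s$ becomes $J_s-I_s$, and stating this genericity and extension remark explicitly.
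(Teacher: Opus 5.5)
Your proposal is correct and follows essentially the same route as the paper: Proposition~\ref{Ls}, then the Generalized Matrix Determinant Lemma with the diagonal part $xI_c-\operatorname{diag}(\alpha_p)$, then the entrywise computation $y_{ij}=\sum_{k\neq i}b_{kj}$ using $\sum_{k\neq i}n_{kp}=m_p-n_{ip}$. You factor the update as $(-N^{\top})\bigl((I_s-J_s)N\bigr)$ rather than the paper's $N^{\top}(J_s-I_s)N$, and you state explicitly the restriction to $x\notin\{\alpha_p\}$ and the rational-function extension, which the paper leaves implicit; neither changes the argument.
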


\begin{proof}
	By Proposition \ref{Ls} and the Generalized Matrix Determinant Lemma (Lemma \ref{gmdl}), we find that the Laplacian characteristic polynomial of $K_{n_1,\ldots,n_s}/F$ satisfies
	\begin{align}
		\Phi(x) &= \det\left( xI_c - \operatorname{diag}(\alpha_1,\ldots,\alpha_c) - N^{\top}(I_s - J_s)N \right) \nonumber \\
		&= \det\left( \operatorname{diag}(x - \alpha_1, \ldots, x - \alpha_c) + N^{\top}\cdot(J_s - I_s)N \right) \nonumber \\
		&= \left( \prod_{p=1}^c (x - \alpha_p) \right) \det\left( I_s + (J_s - I_s)N \operatorname{diag}\left( \frac{1}{x - \alpha_1}, \ldots, \frac{1}{x - \alpha_c} \right) N^{\top} \right). \nonumber
	\end{align}
	Let $B = N \operatorname{diag}\left( \frac{1}{x - \alpha_1}, \ldots, \frac{1}{x - \alpha_c} \right) N^{\top}$ and $Y = (J_s - I_s)B$. It suffices to show that the $(i,j)$ entry of $Y$ satisfies Eq.~\eqref{yij} for each $i,j \in \{1, \ldots, s\}$. Direct calculation shows that the $(k,j)$ entry of $B$ is
	\begin{equation*}
		b_{kj} = \sum_{p=1}^c \frac{n_{kp} n_{jp}}{x - \alpha_p}.
	\end{equation*}
	Since $Y = (J_s - I_s)B$, we have
	\begin{equation*}
		y_{ij} = \sum_{k \neq i} b_{kj} = \sum_{k \neq i} \sum_{p=1}^c \frac{n_{kp} n_{jp}}{x - \alpha_p} = \sum_{p=1}^c \frac{n_{jp}}{x - \alpha_p} \left( \sum_{k \neq i} n_{kp} \right).
	\end{equation*}
	Note that $\sum_{k \neq i} n_{kp} = m_p - n_{ip}$. Thus, Eq.~\eqref{yij} holds, and the proof of Proposition \ref{lcp} is complete.
\end{proof}
\begin{definition}\label{defZ}
	Let $Z(x)$ be the $s \times s$ matrix $I_s + Y(x)$ as defined in Proposition \ref{lcp}. That is, the $(i,j)$ entry of $Z(x)$ is 
	\begin{equation}\label{Z}
		z_{ij} = \delta_{ij} + \sum_{p=1}^c \frac{n_{jp}(m_p - n_{ip})}{x - \alpha_p},
	\end{equation} 
	where $\delta_{ij}$ is the Kronecker delta defined by
	\begin{equation*}
		\delta_{ij} = \begin{cases}
			1 & \text{if } i=j, \\
			0 & \text{if } i \neq j.
		\end{cases}
	\end{equation*}
\end{definition}

\begin{proposition}\label{Zsin}
	The matrix $Z(0)$ is singular, i.e., $\det Z(0) = 0$.
\end{proposition}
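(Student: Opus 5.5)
Since $\det Z(0)=0$ is equivalent to singularity, the plan is to exhibit an explicit nonzero vector $\mathbf{u}\in\mathbb{R}^s$ with $Z(0)\mathbf{u}=\mathbf{0}$. The natural candidate is $\mathbf{u}=(n_1,\ldots,n_s)^{\top}$, the vector of part sizes. The reason for expecting this is that $Z(0)=I_s+(J_s-I_s)B(0)$ arises, via Lemma~\ref{gmdl}, from the singular matrix $-L(K_{n_1,\ldots,n_s}/F)$. Here $L\mathbf{e}_c=\mathbf{0}$, and $\mathbf{e}_c$ corresponds under $N$ to $N\mathbf{e}_c=(n_1,\ldots,n_s)^{\top}$ by Eq.~\eqref{rfp}.

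The key step is the computation of the $i$-th entry of $Z(0)\mathbf{u}$ using Eq.~\eqref{Z} at $x=0$:
\begin{equation*}
(Z(0)\mathbf{u})_i = n_i - \sum_{p=1}^c \frac{m_p-n_{ip}}{\alpha_p}\sum_{j=1}^s n_j n_{jp}.
\end{equation*}
By the definition of $\alpha_p$ we have $\sum_j n_j n_{jp}=nm_p-\alpha_p$. Substituting this, the sum splits into two pieces.

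The first piece is $-\sum_p (m_p-n_{ip}) = -(n-n_i)$, using Eq.~\eqref{rfp}.

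The second piece is $n\sum_p \frac{m_p(m_p-n_{ip})}{\alpha_p}$. It does not obviously cancel, and this is the main obstacle; I suspect my guess of $\mathbf{u}$ needs adjusting.

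Two more careful routes are available. One is to take $\mathbf{u}$ to be the image of $\mathbf{e}_c$ under the map that transports kernel vectors, namely $\mathbf{u}=(J_s-I_s)N\,\mathrm{diag}(-1/\alpha_p)\,L'\mathbf{e}_c$, and adjust accordingly. The cleaner alternative is to avoid the explicit vector entirely and argue by continuity. For $x$ near $0$, $x\neq0$, and $x\neq\alpha_p$ (all $\alpha_p>0$ by Lemma~\ref{pos}), Proposition~\ref{lcp} gives $\det Z(x)=\Phi(x)/\prod_p(x-\alpha_p)$. Since the denominator does not vanish at $x=0$, both sides are rational functions continuous at $0$, so $\det Z(0)=\Phi(0)/\prod_p(-\alpha_p)$.

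Finally, $\Phi(0)=\det(-L(K_{n_1,\ldots,n_s}/F))=0$, because every Laplacian has zero row sums and hence $L\mathbf{e}=\mathbf{0}$. Thus $\det Z(0)=0$. I will present this determinant-continuity argument as the proof, since it uses only Proposition~\ref{lcp} and Lemma~\ref{pos}. Its only delicate point is that Lemma~\ref{gmdl} needs invertibility of $\mathrm{diag}(x-\alpha_p)$, which holds at $x=0$ directly because each $\alpha_p>0$. So the identity can even be applied at $x=0$ without any limit.
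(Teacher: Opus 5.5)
Your final argument, which applies the factorization of Proposition~\ref{lcp} directly at $x=0$ (legitimate since $\prod_p(-\alpha_p)\neq 0$ by Lemma~\ref{pos}) and uses $\Phi(0)=\det(-L)=0$, is exactly the paper's proof, so the continuity detour can simply be dropped. Your explicit-vector attempt failed because $\mathbf{b}=N\mathbf{e}$ is a \emph{left} null vector of $Z(0)$, whereas the right null vector is $(J_s-I_s)N\mathbf{e}=n\mathbf{e}-\mathbf{b}$, as in Lemma~\ref{lrk}.
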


\begin{proof}
	By Lemma \ref{pos}, each $\alpha_p$ is nonzero, ensuring that $Z(0)$ is well-defined in Eq.~\eqref{Z}. Let $\Phi(x)$ be the Laplacian characteristic polynomial of $K_{n_1,\ldots,n_s}/F$. Since every Laplacian matrix is singular, we clearly have $\Phi(0) = 0$. On the other hand, Proposition \ref{lcp} implies
	\begin{equation*}
		\Phi(0) = \left( \prod_{p=1}^{c} (-\alpha_p) \right) \det Z(0) = 0.
	\end{equation*}
	As $\prod_{p=1}^{c} (-\alpha_p) \neq 0$ by Lemma \ref{pos}, we must have $\det Z(0) = 0$, as desired.
\end{proof}
\begin{proposition}\label{intra}
	\begin{equation}\label{tautr}
		\tau_F(K_{n_1,\ldots,n_s})=\frac{1}{c}\left(\prod_{p=1}^c\alpha_p\right)\tr\left(-\adj Z(0) \left.\frac{\mathrm{d}Z(x)}{\mathrm{d}x} \right|_{x=0}\right).
	\end{equation}
\end{proposition}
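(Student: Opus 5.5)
We start from Corollary \ref{taud} applied to $G=K_{n_1,\ldots,n_s}$, which gives
\begin{equation*}
\tau_F(K_{n_1,\ldots,n_s})=\frac{(-1)^{c-1}}{c}\,\Phi'(0),
\end{equation*}
where $\Phi(x)$ is the Laplacian characteristic polynomial of $K_{n_1,\ldots,n_s}/F$. By Proposition \ref{lcp} we write $\Phi(x)=P(x)\det Z(x)$ with $P(x)=\prod_{p=1}^c(x-\alpha_p)$. By Lemma \ref{pos}, every $\alpha_p>0$, so $P$ and all entries of $Z(x)$ are differentiable (indeed rational with no pole) in a neighborhood of $x=0$. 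The product rule then applies there.

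Next we differentiate:
\begin{equation*}
\Phi'(0)=P'(0)\det Z(0)+P(0)\,\frac{\mathrm{d}}{\mathrm{d}x}\det Z(x)\Big|_{x=0}.
\end{equation*}
The first term vanishes by Proposition \ref{Zsin}. For the second term we use Jacobi's formula (Lemma \ref{jac}) with $A(x)=Z(x)$, which gives
\begin{equation*}
\frac{\mathrm{d}}{\mathrm{d}x}\det Z(x)\Big|_{x=0}=\tr\left(\adj Z(0)\,\left.\frac{\mathrm{d}Z(x)}{\mathrm{d}x}\right|_{x=0}\right).
\end{equation*}

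Finally we track the signs. We have $P(0)=\prod_{p}(-\alpha_p)=(-1)^c\prod_p\alpha_p$, so
\begin{equation*}
\frac{(-1)^{c-1}}{c}P(0)=\frac{(-1)^{2c-1}}{c}\prod_p\alpha_p=-\frac{1}{c}\prod_p\alpha_p.
\end{equation*}
Absorbing the minus sign into the trace by linearity yields exactly Eq.~\eqref{tautr}.

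The argument is essentially routine. The only points needing care are the sign bookkeeping and the justification that Jacobi's formula applies at $x=0$. The latter holds because $Z(x)$ is smooth near $0$, since no $\alpha_p$ equals $0$.
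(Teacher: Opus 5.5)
Your proposal is correct and follows the paper's proof essentially verbatim: Corollary \ref{taud}, then the product rule with the $\det Z(0)$ term removed by Proposition \ref{Zsin}, then Jacobi's formula, and finally the sign computation $(-1)^{c-1}(-1)^c=-1$. You also note that $\Phi(x)=P(x)\det Z(x)$ holds and $Z$ is smooth on a neighborhood of $0$ because every $\alpha_p>0$; the paper leaves this justification implicit.
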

\begin{proof}
	Let $\Phi(x)=\Phi(K_{n_1,\ldots,n_s}/F;x)$. By Proposition \ref{lcp}, we have 
	\begin{equation*}
		\Phi(x)=\left(\prod_{p=1}^{c}(x-\alpha_p)\right)\det Z(x).
	\end{equation*}
	It follows from Corollary \ref{taud} that
	\begin{equation}\label{dprod}
		\tau_F(K_{n_1,\ldots,n_s}) = \frac{(-1)^{c-1}}{c} \left. \frac{\mathrm{d}}{\mathrm{d}x} \left[ \left( \prod_{p=1}^{c} (x-\alpha_p) \right) \det Z(x) \right] \right|_{x=0}.
	\end{equation} 
	By the  Leibniz product rule and the fact that $\det Z(0)=0$, Eq.~\eqref{dprod} can be simplified  as
	\begin{equation*}
		\tau_F(K_{n_1,\ldots,n_s})=\frac{(-1)^{c-1}}{c}\left(\prod_{p=1}^{c}(-\alpha_p)\right)\left.\frac{\mathrm{d}}{\mathrm{d}x}\det Z(x)\right|_{x=0}.
	\end{equation*}
Using Jacobi's formula (Lemma \ref{jac}), Eq.~\eqref{tautr} follows. The proof is complete.
\end{proof}
\section{Main result}
We first establish some basic properties of the $s \times s$ matrix $Z(0)$, which will facilitate a simpler but equivalent form for the term
\begin{equation*}
	\tr\left( -\adj Z(0) \left. \frac{\mathrm{d}Z(x)}{\mathrm{d}x} \right|_{x=0} \right)
\end{equation*}
appearing in the expression for $\tau_F(K_{n_1,\ldots,n_s})$.

\begin{lemma}\label{rz}
	The rank of $Z(0)$ is $s-1$.
\end{lemma}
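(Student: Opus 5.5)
The plan is to squeeze the rank from both sides. For the upper bound, Proposition \ref{Zsin} gives $\det Z(0)=0$, so $\operatorname{rank} Z(0)\le s-1$. For the lower bound, I will show that $\dim\ker Z(0)\le 1$ by mapping $\ker Z(0)$ injectively into $\ker L(K_{n_1,\ldots,n_s}/F)$, and then show that this Laplacian kernel is one-dimensional.

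To set up the map, I use the factorization from the proof of Proposition \ref{lcp} evaluated at $x=0$. Put
\[
A=-\operatorname{diag}(\alpha_1,\ldots,\alpha_c),\qquad U=N^{\top},\qquad V=(J_s-I_s)N .
\]
Then $A+UV=-L(K_{n_1,\ldots,n_s}/F)$ by Proposition \ref{Ls}, and $Z(0)=I_s+VA^{-1}U$. Here $A$ is invertible by Lemma \ref{pos}.

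Now take $w\in\ker Z(0)$ and set $u=A^{-1}Uw\in\mathbb{R}^c$. Then
\[
(A+UV)u=Uw+UVA^{-1}Uw=U\,Z(0)w=0,
\]
so $u\in\ker L(K_{n_1,\ldots,n_s}/F)$. The map $w\mapsto u$ is linear, and it is injective. Indeed, if $u=0$ then $Uw=0$. From $Z(0)w=0$ we have $w=-VA^{-1}Uw$, so $w=0$. Hence $\dim\ker Z(0)\le\dim\ker L(K_{n_1,\ldots,n_s}/F)$.

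It remains to see that $\dim\ker L(K_{n_1,\ldots,n_s}/F)=1$. This is the standard fact that a connected graph has Laplacian nullity one. It also follows from Lemma \ref{mtt}: if $\tau>0$ then $\adj L\neq 0$, so $\operatorname{rank} L\ge c-1$. Connectivity holds because $K_{n_1,\ldots,n_s}$ with $s\ge 2$ and all $n_i\ge1$ is connected. Contracting the edges of $F$ preserves connectivity, so $\tau(K_{n_1,\ldots,n_s}/F)=\tau_F(K_{n_1,\ldots,n_s})>0$. Strictly, this positivity needs a spanning tree containing $F$. That follows by extending the forest $F$ greedily to a spanning tree of the connected graph $K_{n_1,\ldots,n_s}$.

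Combining the two bounds gives $\operatorname{rank} Z(0)=s-1$. The only step that needs care is the injectivity of the kernel map; it is routine, and I do not expect a real obstacle.
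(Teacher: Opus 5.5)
Your proof is correct, and it reaches the lower bound by a different route from the paper. The paper argues by contradiction through the enumerative formula. If $\operatorname{rank} Z(0)<s-1$ then $\adj Z(0)=0$, so the trace in Proposition~\ref{intra} vanishes and $\tau_F(K_{n_1,\ldots,n_s})=0$, which contradicts the connectivity of $K_{n_1,\ldots,n_s}/F$.

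You instead turn the Generalized Matrix Determinant Lemma from a determinant identity into a statement about kernels. With $A=-\operatorname{diag}(\alpha_1,\ldots,\alpha_c)$, $U=N^{\top}$ and $V=(J_s-I_s)N$, the map $w\mapsto A^{-1}Uw$ embeds $\ker Z(0)$ into $\ker L(K_{n_1,\ldots,n_s}/F)$. Your checks are all sound: $A+UV=-L$, $Z(0)=I_s+VA^{-1}U$, and injectivity via $w=-VA^{-1}Uw$.

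The paper's version is shorter once Proposition~\ref{intra} is available, but it obtains a linear-algebraic fact from Jacobi's formula and the spanning-tree count. Yours needs neither Proposition~\ref{intra} nor any differentiation, and it gives more. The map $u\mapsto -Vu$ is a two-sided inverse of yours: if $(A+UV)u=0$ then $u=-A^{-1}UVu$, so $Z(0)(-Vu)=0$ and $A^{-1}U(-Vu)=u$. Hence $\ker Z(0)\cong\ker L$.

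Since $\ker L$ is spanned by the all-ones vector of length $c$, and $N$ maps that vector to $\mathbf{b}$ by Eq.~\eqref{rfp}, you get $\ker Z(0)=\operatorname{span}\{(J_s-I_s)\mathbf{b}\}=\operatorname{span}\{n\mathbf{e}-\mathbf{b}\}$. This recovers the second half of Lemma~\ref{lrk} without the entrywise computation.
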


\begin{proof}
	From Proposition \ref{Zsin}, we know that $Z(0)$ is singular, so $\rank Z(0) \leq s-1$. Suppose, toward a contradiction, that $\rank Z(0) < s-1$. Then $\adj Z(0)$ is the zero matrix, which implies that the trace term in Eq.~\eqref{tautr} vanishes. Consequently, by Proposition \ref{intra}, we would have $\tau_F(K_{n_1,\ldots,n_s}) = 0$. However, since the graph $K_{n_1,\ldots,n_s}/F$ is connected, its number of spanning trees must be positive, i.e., $\tau_F > 0$. This leads to a contradiction. Thus, $\rank Z(0) = s-1$.
\end{proof}

\begin{lemma}\label{lrk}
	Let $\mathbf{b} = (n_1,\ldots,n_s)^{\top}$. Then $\mathbf{b}^{\top} Z(0) = 0$ and $Z(0)(n\mathbf{e} - \mathbf{b}) = 0$.
\end{lemma}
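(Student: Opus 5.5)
Both identities can be checked entry by entry from the explicit formula \eqref{Z} at $x=0$,
\begin{equation*}
z_{ij}(0)=\delta_{ij}-\sum_{p=1}^c \frac{n_{jp}(m_p-n_{ip})}{\alpha_p}.
\end{equation*}
The only inputs needed are the two descriptions of $\alpha_p$ in Lemma \ref{pos} and the row/column sums in Eq.~\eqref{rfp}. In each case the plan is to swap the order of summation so that the sum over $p$ is outermost, and then recognize one of the expressions for $\alpha_p$ in the numerator.

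For the left null vector, fix $j$ and compute the $j$-th entry of $\mathbf{b}^{\top}Z(0)$:
\begin{equation*}
\sum_{i=1}^s n_i z_{ij}(0)=n_j-\sum_{p=1}^c\frac{n_{jp}}{\alpha_p}\sum_{i=1}^s n_i(m_p-n_{ip}).
\end{equation*}
By the first description in Lemma \ref{pos}, the inner sum equals $\alpha_p$. This cancels the denominator and leaves $n_j-\sum_p n_{jp}=0$ by Eq.~\eqref{rfp}.

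For the right null vector, fix $i$ and compute the $i$-th entry of $Z(0)(n\mathbf{e}-\mathbf{b})$:
\begin{equation*}
\sum_{j=1}^s (n-n_j)z_{ij}(0)=(n-n_i)-\sum_{p=1}^c\frac{m_p-n_{ip}}{\alpha_p}\sum_{j=1}^s (n-n_j)n_{jp}.
\end{equation*}
By the second description in Lemma \ref{pos}, the inner sum is again $\alpha_p$. What remains is $(n-n_i)-\sum_p(m_p-n_{ip})=(n-n_i)-(n-n_i)=0$, again by Eq.~\eqref{rfp}.

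There is no real obstacle. The only point requiring care is index bookkeeping: $z_{ij}$ involves $n_{jp}$ (column index) and $m_p-n_{ip}$ (row index). Left multiplication by $\mathbf{b}^{\top}$ sums over the row index $i$, which pairs $n_i$ with $m_p-n_{ip}$ and so uses the first form of $\alpha_p$. Right multiplication by $n\mathbf{e}-\mathbf{b}$ sums over the column index $j$, which pairs $n-n_j$ with $n_{jp}$ and so uses the second form. Lemma \ref{pos} was evidently set up to supply exactly these two expressions, and its positivity statement guarantees that $Z(0)$ is well defined.
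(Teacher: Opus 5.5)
Your proposal is correct and matches the paper's proof: you compute each entry directly, swap the order of summation, and recognize one of the two expressions for $\alpha_p$ from Lemma~\ref{pos}. You then finish with the identities in Eq.~\eqref{rfp}, pairing each description with the correct side (first for $\mathbf{b}^{\top}Z(0)$, second for $Z(0)(n\mathbf{e}-\mathbf{b})$), exactly as the paper does.
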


\begin{proof}
	Let $b_j^{(1)}$ be the $j$-th entry of $\mathbf{b}^{\top} Z(0)$ for $j=1,\ldots,s$. By Eq.~\eqref{Z}, the $(i,j)$ entry of $Z(0)$ is 
	\begin{equation*}
		z_{ij}(0) = \delta_{ij} - \sum_{p=1}^c \frac{n_{jp}(m_p - n_{ip})}{\alpha_p}.
	\end{equation*}	
	Thus, for any $j \in \{1,\ldots,s\}$, we have
	\begin{align}\label{ai}
		b_j^{(1)} &= \sum_{i=1}^s n_i z_{ij}(0)\nonumber\\
		 &= n_j - \sum_{i=1}^s \sum_{p=1}^{c} \frac{n_i n_{jp}(m_p - n_{ip})}{\alpha_p} \nonumber\\
		&= n_j - \sum_{p=1}^{c} \frac{n_{jp}}{\alpha_p} \sum_{i=1}^s n_i(m_p - n_{ip}).
	\end{align}
	By Lemma \ref{pos}, $\alpha_p = \sum_{i=1}^s n_i(m_p - n_{ip})$. Hence, Eq.~\eqref{ai} reduces to 
	\begin{equation*}
		n_j - \sum_{p=1}^c n_{jp} = 0,
	\end{equation*}
	which follows from the second identity in Eq.~\eqref{rfp}. Next, we verify the second equality $Z(0)(n\mathbf{e} - \mathbf{b}) = 0$. Let $b_i^{(2)}$ be the $i$-th entry of $Z(0)(n\mathbf{e} - \mathbf{b})$. Noting that $\alpha_p = \sum_{j=1}^{s}(n - n_j)n_{jp}$ from Lemma \ref{pos}, a similar calculation yields
	\begin{align}
		b_i^{(2)} &= \sum_{j=1}^s z_{ij}(0)(n - n_j) \nonumber\\
		&= (n - n_i) - \sum_{j=1}^{s} \sum_{p=1}^c \frac{(n - n_j)n_{jp}(m_p - n_{ip})}{\alpha_p} \nonumber\\
		&= (n - n_i) - \sum_{p=1}^c \frac{(m_p - n_{ip})}{\alpha_p} \sum_{j=1}^{s} (n - n_j)n_{jp} \nonumber\\
		&= (n - n_i) - \sum_{p=1}^c (m_p - n_{ip}) \nonumber\\
		&= \left( n - \sum_{p=1}^c m_p \right) - \left( n_i - \sum_{p=1}^c n_{ip} \right)\nonumber\\
		& = 0. \nonumber
	\end{align}
	This completes the proof.
\end{proof}
A direct application of Lemma \ref{lrk} is the following characterization of the adjugate matrix of $Z(0)$.
\begin{proposition}\label{stradj}
Let $\mathbf{b}=(n_1,\ldots,n_s)^{\top}$. Then	$\adj Z(0)=\gamma(n\mathbf{e}-\mathbf{b})\mathbf{b}^{\top}$ for some constant $\gamma$. Moreover, for any $i,j\in \{1,\ldots,s\}$,
\begin{equation}\label{defC}
\gamma=\frac{C_{ij}}{n_i(n-n_j)},
\end{equation}
where $C_{ij}$ is the cofactor of $Z(0)$ corresponding to the $(i,j)$ entry.
\end{proposition}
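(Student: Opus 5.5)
Our plan is to combine Lemma \ref{rz} with Lemma \ref{lrk} through the standard structure theorem for adjugates of rank-deficient matrices. Write $A=Z(0)$. By Lemma \ref{rz}, $\rank A=s-1$, so $\adj A\neq 0$ and $\rank \adj A=1$. The latter holds because $A\,\adj A=\adj A\,A=(\det A)I_s=0$ by Proposition \ref{Zsin}. Hence every column of $\adj A$ lies in $\ker A$, and every row of $\adj A$ lies in the left kernel of $A$. Since $\rank A=s-1$, both kernels are one-dimensional.

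Next we use Lemma \ref{lrk}. It shows that $n\mathbf{e}-\mathbf{b}$ lies in $\ker A$ and $\mathbf{b}^{\top}$ lies in the left kernel. Both vectors are nonzero: every $n_i\geq 1$, and $n-n_i\geq n_j>0$ for any $j\neq i$ since $s\ge 2$. So they span the respective one-dimensional kernels. The relation $A\,\adj A=0$ makes each column of $\adj A$ a multiple of $n\mathbf{e}-\mathbf{b}$, so $\adj A=(n\mathbf{e}-\mathbf{b})\mathbf{w}^{\top}$ for some vector $\mathbf{w}$. The relation $\adj A\,A=0$ then gives $(n\mathbf{e}-\mathbf{b})(\mathbf{w}^{\top}A)=0$, hence $\mathbf{w}^{\top}A=0$, so $\mathbf{w}=\gamma\mathbf{b}$ for a scalar $\gamma$. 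This gives $\adj Z(0)=\gamma(n\mathbf{e}-\mathbf{b})\mathbf{b}^{\top}$. The argument does not need $\gamma\neq0$, although Lemma \ref{rz} guarantees it.

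For the formula \eqref{defC}, we read off entries. By the paper's convention, the $(j,i)$ entry of $\adj Z(0)$ is the cofactor $C_{ij}$. From the rank-one form, the $(j,i)$ entry equals $\gamma(n-n_j)n_i$. Since $n_i\geq1$ and $n-n_j>0$, we can divide to get $\gamma=C_{ij}/(n_i(n-n_j))$ for every pair $i,j$.

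The only delicate point is the index bookkeeping in this last step, namely the transpose between cofactors and adjugate entries; it is what makes $n_i$ pair with the row index of $C_{ij}$. The rest is routine linear algebra.
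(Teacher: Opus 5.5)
Your proof is correct and follows the paper's argument essentially step for step. You use $\rank Z(0)=s-1$ and Lemma \ref{lrk} to force $\adj Z(0)=(n\mathbf{e}-\mathbf{b})\mathbf{w}^{\top}$ with $\mathbf{w}=\gamma\mathbf{b}$, then read off the $(j,i)$ entry to get $\gamma$; you also check explicitly that $n\mathbf{e}-\mathbf{b}$ and $\mathbf{b}$ are nonzero, which the paper only asserts.
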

\begin{proof}
	By Lemma \ref{rz}, we know that $\rank Z(0)=s-1$. Since $$Z(0)\adj Z(0)=(\det Z(0))I=0,$$
we find that each column of $\adj Z(0)$ belongs to the kernel of $Z(0)$. But this kernel is a one-dimensional space, spanned by the nonzero vector $(n\mathbf{e}-\mathbf{b})$ due to Lemma \ref{lrk}. Thus each column of $\adj Z(0)$ is a multiple of  $n\mathbf{e}-\mathbf{b}$. In other words, the matrix  $\adj Z(0)$ can be written in the form 
	$(n\mathbf{e}-\mathbf{b})\mathbf{f}^\top$ for some column vector $\mathbf{f}\in \mathbb{R}^s$. Similarly, as $(\adj Z(0))Z(0)=0$, we find $\mathbf{f}^\top Z(0)=0$ and hence $\mathbf{f}=\gamma \mathbf{b}$ for some constant $\gamma$. Therefore,
		$\adj Z(0)=\gamma(n\mathbf{e}-\mathbf{b})\mathbf{b}^\top$. By considering the $(j,i)$ entries of the two sides, we have
		\begin{equation*}
		C_{ij}=\gamma(n-n_j)n_i,\text{~i.e.,~} \gamma=\frac{C_{ij}}{n_i(n-n_j)}.
		\end{equation*}
		This completes the proof of Proposition \ref{stradj}.	
\end{proof}
Now we can simplify the trace expression stated at the beginning of this section.
\begin{proposition}\label{simtr}
	We have $$\tr\left(-\adj Z(0) \left.\frac{\mathrm{d}Z(x)}{\mathrm{d}x} \right|_{x=0}\right)=c\gamma,$$ where $\gamma$ is defined in Eq.~\eqref{defC}.	
\end{proposition}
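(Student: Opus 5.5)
Substitute the rank-one form of $\adj Z(0)$ from Proposition \ref{stradj} into the trace and use cyclicity. With $\mathbf{b}=(n_1,\ldots,n_s)^{\top}$ and $\adj Z(0)=\gamma(n\mathbf{e}-\mathbf{b})\mathbf{b}^{\top}$, we get
\begin{equation*}
\tr\left(-\adj Z(0) Z'(0)\right) = -\gamma\, \mathbf{b}^{\top} Z'(0)(n\mathbf{e}-\mathbf{b}),
\end{equation*}
where $Z'(0)$ denotes $\left.\frac{\mathrm{d}Z(x)}{\mathrm{d}x}\right|_{x=0}$. It therefore suffices to show that the scalar $\mathbf{b}^{\top} Z'(0)(n\mathbf{e}-\mathbf{b})$ equals $-c$.

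Next we compute $Z'(0)$ entrywise from Eq.~\eqref{Z}. The Kronecker delta is constant, so
\begin{equation*}
z_{ij}'(0) = -\sum_{p=1}^c \frac{n_{jp}(m_p-n_{ip})}{\alpha_p^2}.
\end{equation*}
Hence
\begin{equation*}
\mathbf{b}^{\top} Z'(0)(n\mathbf{e}-\mathbf{b}) = -\sum_{p=1}^c \frac{1}{\alpha_p^2}\left(\sum_{i=1}^s n_i(m_p-n_{ip})\right)\left(\sum_{j=1}^s (n-n_j)n_{jp}\right),
\end{equation*}
because the summand factors as a product of a function of $i$ and a function of $j$ once $p$ is fixed.

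By the two descriptions of $\alpha_p$ in Lemma \ref{pos}, each inner sum equals $\alpha_p$. Each term in the outer sum therefore collapses to $\alpha_p^2/\alpha_p^2=1$, giving $-c$. Substituting back yields $\tr(-\adj Z(0)Z'(0)) = -\gamma\cdot(-c) = c\gamma$, as claimed.

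There is no genuine obstacle here. The one point to handle carefully is the order of the vectors: $\mathbf{b}^{\top}$ must sit on the left (row index $i$) and $n\mathbf{e}-\mathbf{b}$ on the right (column index $j$). This is what pairs $n_i$ with $m_p-n_{ip}$ and $(n-n_j)$ with $n_{jp}$, matching exactly the two expressions for $\alpha_p$ in Lemma \ref{pos}, just as in the proof of Lemma \ref{lrk}.
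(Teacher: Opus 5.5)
Your proof is correct and follows the paper's argument essentially step for step. You substitute $\adj Z(0)=\gamma(n\mathbf{e}-\mathbf{b})\mathbf{b}^{\top}$, use cyclicity of the trace to reduce to the scalar $\mathbf{b}^{\top}Z'(0)(n\mathbf{e}-\mathbf{b})$, and collapse each $p$-term to $\alpha_p^2/\alpha_p^2=1$ via the two expressions for $\alpha_p$ in Lemma \ref{pos}.
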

\begin{proof}
 Let $d_{ij}$ be the $(i,j)$ entry of $\left.\frac{\mathrm{d}Z(x)}{\mathrm{d}x} \right|_{x=0}$ for $i,j\in\{1,\ldots,s\}$. By Definition \ref{defZ}, we have
 \begin{equation*}
 	d_{ij}=\left.\frac{\mathrm{d}z_{ij}}{\mathrm{d}x} \right|_{x=0}=-\sum_{p=1}^{c}\frac{n_{jp}(m_p-n_{ip})}{\alpha_p^2}.
 \end{equation*}
 Since $\tr(AB)=\tr(BA)$, we find by Proposition \ref{stradj} that, 
 \begin{align}
 \tr\left(-\adj Z(0) \left.\frac{\mathrm{d}Z(x)}{\mathrm{d}x} \right|_{x=0}\right)&=\tr\left(-\gamma(n\mathbf{e}-\mathbf{b})\cdot \mathbf{b}^\top\left.\frac{\mathrm{d}Z(x)}{\mathrm{d}x} \right|_{x=0} \right)\nonumber\\
 &=\gamma\tr\left(- \mathbf{b}^\top \left.\frac{\mathrm{d}Z(x)}{\mathrm{d}x} \right|_{x=0}\cdot(n\mathbf{e}-\mathbf{b}) \right)\nonumber\\
 &=\gamma \sum_{i=1}^s\sum_{j=1}^s n_i(- d_{ij})(n-n_j)\nonumber\\
 &=\gamma \sum_{i=1}^s\sum_{j=1}^s\sum_{p=1}^c n_i\frac{n_{jp}(m_p-n_{ip})}{\alpha_p^2}(n-n_j)\nonumber\\
 &=\gamma\sum_{p=1}^c\frac{1}{\alpha_p^2} \sum_{i=1}^s n_i(m_p-n_{ip})\sum_{j=1}^s (n-n_j) n_{jp}\nonumber\\
 &=\gamma\sum_{p=1}^c\frac{1}{\alpha_p^2}\times \alpha_p\times \alpha_p\nonumber\\
 &=c\gamma\nonumber,
 \end{align}
 where the second to last equality follows from the first statement of Lemma \ref{pos}.
\end{proof}
Now we are in a position to present the main result of this paper.
\begin{theorem}\label{main}
	For a spanning forest of $K_{n_1,\ldots,n_s}$ with $c$ components,
	\begin{equation}\label{gf}
	\tau_F(K_{n_1,\ldots,n_s})=\frac{C_{ij}}{n_i(n-n_j)}\prod_{p=1}^c \alpha_p \text{~for any~} i,j\in \{1,\ldots,s\},
	\end{equation}
	where $C_{ij}$ is the cofactor of the  matrix 
	\begin{equation}\label{mZ}
		I_s-\left(\sum_{p=1}^c\frac{n_{jp}(m_p-n_{ip})}{\alpha_p}\right)_{s\times s}
	\end{equation}
	corresponding to the $(i,j)$ entry.
\end{theorem}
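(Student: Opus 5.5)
The theorem follows by combining Proposition \ref{intra}, Proposition \ref{simtr} and Proposition \ref{stradj}. First observe that the matrix in \eqref{mZ} is exactly $Z(0)$: setting $x=0$ in Definition \ref{defZ} gives
\[
z_{ij}(0)=\delta_{ij}-\sum_{p=1}^c \frac{n_{jp}(m_p-n_{ip})}{\alpha_p}.
\]
This evaluation is legitimate because every $\alpha_p>0$ by Lemma \ref{pos}. Hence the cofactor $C_{ij}$ in the statement is the same cofactor of $Z(0)$ that appears in Proposition \ref{stradj}.

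Next, Proposition \ref{intra} expresses $\tau_F(K_{n_1,\ldots,n_s})$ as $\frac{1}{c}\left(\prod_p\alpha_p\right)$ times the trace $\tr\bigl(-\adj Z(0)\, Z'(0)\bigr)$. Proposition \ref{simtr} evaluates this trace as $c\gamma$. Substituting, the factor $c$ cancels and we obtain $\tau_F=\gamma\prod_{p=1}^c\alpha_p$.

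Finally, by Proposition \ref{stradj}, $\gamma=C_{ij}/(n_i(n-n_j))$ for every pair $i,j$. The denominator never vanishes, since $n_i\ge 1$ and $n-n_j\ge n_k\ge1$ for any $k\ne j$ (using $s\ge2$). Substituting this value of $\gamma$ yields \eqref{gf} for every choice of $i,j$.

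None of these steps is difficult, since all the real work was done in the earlier propositions. The only points needing care are bookkeeping ones. The first is the index convention: in Proposition \ref{stradj} the cofactor $C_{ij}$ equals the $(j,i)$ entry of $\adj Z(0)$, and that entry matches $\gamma(n-n_j)n_i$. The second is confirming that the notation $\sum_p n_{jp}(m_p-n_{ip})/\alpha_p$ in \eqref{mZ} refers to a generic $(i,j)$ entry and not to the fixed indices $i,j$ of the cofactor.
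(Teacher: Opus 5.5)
Your proposal is correct and matches the paper's proof, which substitutes the trace value $c\gamma$ from Proposition~\ref{simtr} into Proposition~\ref{intra}, with $\gamma=C_{ij}/(n_i(n-n_j))$ taken from Proposition~\ref{stradj}. Your extra checks (identifying the matrix in \eqref{mZ} with $Z(0)$, the nonvanishing denominator, and the $(j,i)$-entry cofactor convention) are bookkeeping that the paper leaves implicit.
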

\begin{proof}
	By Propositions \ref{intra} and \ref{simtr}, Theorem \ref{main} follows.
\end{proof}
For the special case $s=2$, it is straightforward to recover the result of Dong and Ge \cite{dong2022}. Letting  $i=j=1$, we have, from Eq.~\eqref{gf}, 
\begin{equation}\label{dg0}	\tau_F(K_{n_1,n_2})=\frac{C_{11}}{n_1n_2}\prod_{p=1}^c\alpha_p.
	\end{equation}
By Lemma \ref{pos}, $\alpha_p=n_{1p}n_2+n_{2p}n_1$. Note that $C_{11}$ is the right corner of the $2\times s$ matrix described in Eq.~\eqref{mZ}. We have  
\begin{equation*}
	C_{11}=1-\sum_{p=1}^c\frac{n_{2p}(m_p-n_{2p})}{\alpha_p}=1-\sum_{p=1}^c\frac{n_{1p}n_{2p}}{n_{1p}n_2+n_{2p}n_1}.
\end{equation*}
It follows  from Eq.~\eqref{dg0} that 
\begin{equation*}	\tau_F(K_{n_1,n_2})=\frac{1}{n_1n_2}\left(\prod_{p=1}^c (n_{1p}n_2+n_{2p}n_1)\right)\left(1-\sum_{p=1}^c\frac{n_{1p}n_{2p}}{n_{1p}n_2+n_{2p}n_1}\right),
\end{equation*}
which is exactly the formula obtained by Dong and Ge \cite{dong2022}.	
	\section*{Declaration of competing interest}
	There is no conflict of interest.

\section*{Acknowledgements}
This work is partially supported by the National Natural Science Foundation of China (Grant Nos. 12371355 and 12001006) and  Wuhu Science and Technology Project, China (Grant No.~2024kj015).

\end{document}